\documentclass[reqno,oneside]{amsart}
\usepackage{amsmath, amsfonts, amssymb,amsbsy,eucal,mathrsfs}

\usepackage[all]{xy}

\usepackage{chngcntr}

\usepackage{todonotes}

\numberwithin{equation}{section}
\setcounter{secnumdepth}{3}

\setcounter{tocdepth}{1}

\newtheorem{thm}{Theorem}[section]

\newtheorem{lemma}[thm]{Lemma}
\newtheorem{prop}[thm]{Proposition}
\newtheorem{cor}[thm]{Corollary}

\newtheorem{rmk}[thm]{Remark}

    
\DeclareMathOperator{\G}{G}    
\DeclareMathOperator{\IG}{IG}
\DeclareMathOperator{\Sp}{Sp}
\DeclareMathOperator{\QH}{QH}
\DeclareMathOperator{\BQH}{BQH}

\newcommand{\D}{{\Delta}}
\newcommand{\ZZ}{{\mathbb Z}}
\newcommand{\QQ}{{\mathbb Q}}
\newcommand{\bQ}{{\mathbb Q}}
\newcommand{\starz}{{\!\ \star_{0}\!\ }}
\newcommand{\pt}{{{\rm pt}}}
\newcommand{\C}{{\mathbb C}}
\newcommand{\p}{\mathbb{P}}

\def \Yo {{\mathring{Y}}}
\def \spec {{{\rm Spec}}}

\newcommand{\cA}{\mathcal{A}}
\newcommand{\cB}{\mathcal{B}}


\begin{document}

\title{Update on quantum cohomology of $\IG(2,2n)$}

\date{}

\author{Anton Mellit}
\address{Scuola Internazionale Superiore di Studi Avanzati (SISSA), 
via Bonomea 265, Trieste 34136, Italy}
\email{mellit@gmail.com}

\author{Nicolas Perrin}
\address{Universit\'e de Versailles Saint-Quentin-en-Yvelines, 
Laboratoire de Math\'ematiques de Versailles, 
45 avenue des Etats-Unis, 
78035 Versailles Cedex, France}
\email{nicolas.perrin@uvsq.fr}

\author{Maxim Smirnov}
\address{
Riemann Center for Geometry and Physics, 
Leibniz Universit\"at Hannover,
Welfengarten 1, 
30167 Hannover, 
Germany
}
\email{msmirnov@ictp.it}


\begin{abstract}
We give another proof of the generic semisimplicity of the big quantum cohomology of the symplectic isotropic Grassmannians $\IG(2,2n)$.
\end{abstract}

\maketitle

\tableofcontents

\section*{Introduction}

In papers \cite{GMS, Pe} jointly with Sergey Galkin we have shown that the big quantum cohomology of the symplectic isotropic Grassmannians $\IG(2,2n)$ is generically semisimple. In \cite{GMS} only the case of $\IG(2,6)$ is considered and the proof is computer-assisted. The proof in \cite{Pe} is not computer-assisted and works for all $\IG(2,2n)$, but it needs many lengthy computations.  The purpose of this paper is to give a pure thought argument which works uniformly for all $\IG(2,2n)$, is conceptually different from the ones in \textit{loc.cit.}, and might be generalizable to all $\IG(m,2n)$.

The strategy of our proof is the following. We view the big quantum cohomology as a deformation family of zero-dimensional schemes over a base (see Sec.~\ref{SubSec.: Def. Picture}). The special fibre of this family is (the spectrum of) the small quantum cohomology. Generic semisimplicity of the big quantum cohomology translates into the smoothness of the generic fibre\footnote{By "generic fiber" we always mean "fiber over the generic point of the base scheme".} of the deformation family. Similarly, generic semisimplicity of the small quantum cohomology translates into the smoothness of the special fibre. So it may very well happen that the generic fibre is smooth, whereas the special one is not. This is exactly what happens in the case of $\IG(2,2n)$. In fact, we prove that for $\IG(2,2n)$ the total space of the deformation family (the spectrum of the big quantum cohomology) is a regular scheme over a field of characteristic zero. Therefore, by a version of "generic smoothness" in characteristic zero, one concludes that the generic fibre is smooth. Thus, we obtain the generic semisimplicity of the big quantum cohomology,\footnote{This argument reminded us of Problem 2.8 from \cite{HeMaTe}.} whereas the small quantum cohomology of $\IG(2,2n)$ is known to be non-semisimple by \cite{ChMaPe,ChPe}.

Our original motivation to look at this problem comes from Dubrovin's conjecture. We will not elaborate on this here and rather refer to \cite{GaGoIr,GMS,HeMaTe} and references therein. Let us just mention that the Grassmannians $\IG(2,2n)$ are the simplest explicit examples available in the literature where one has to work with big quantum cohomology to formulate Dubrovin's conjecture.

\medskip

\noindent \textbf{Acknowledgements.} We would like to thank Tarig Abdelgadir, Erik Carlsson, Roman Fedorov, Alexander Kuznetsov, Christian Lehn, Yuri Manin, Sina T\"ureli and Runako Williams for valuable discussions, comments, and attention to this work. The first and the third named authors are very grateful to the International Centre for Theoretical Physics (ICTP) in Trieste for financial support and excellent working conditions. The third named author thanks the Riemann Center for Geometry and Physics at the Leibniz Universit\"at Hannover for support at the final stage of this project.

\section{Conventions and notation for quantum cohomology}

Here we briefly recall the definition of the quantum cohomology ring for a smooth projective variety $X$. To simplify the exposition and avoid introducing unnecessary notation we impose from the beginning the following conditions on $X$: it is a Fano variety of Picard rank 1 and $H^{odd}(X,\QQ)=0$. For a thorough introduction we refer to \cite{Ma}.

\subsection{Definition}

Let us fix a graded basis $\Delta_0, \dots , \Delta_s$ in $H^*(X, \QQ)$ and dual linear coordinates $t_0, \dots, t_s$. It is customary to choose $\Delta_0=1$. Let $R$ be the ring of formal power series $\QQ[[q]]$, $k$ its field of fractions, and $K$ an algebraic closure of $k$.

The genus zero Gromov-Witten potential of $X$ is an element $\Phi \in R[[t_0, \dots, t_s]]$ defined by the formula
\begin{align}\label{Eq.: GW potential}
&\Phi = \sum_{(i_0, \dots , i_s)}  \langle \Delta_0^{\otimes i_0}, \dots, \Delta_s^{\otimes i_s} \rangle \frac{t_0^{i_0} \dots t_s^{i_s} }{i_0!\dots i_s!}, 
\end{align}
where 
$\langle \Delta_0^{\otimes i_0}, \dots, \Delta_s^{\otimes i_s} \rangle := \sum_{d=0}^{\infty} \langle \Delta_0^{\otimes i_0}, \dots, \Delta_s^{\otimes i_s} \rangle_d q^d$,
and the rational numbers $\langle \Delta_0^{\otimes i_0}, \dots, \Delta_s^{\otimes i_s} \rangle_d$ are Gromov-Witten invariants of $X$ of degree $d$.

Using \eqref{Eq.: GW potential} one defines the quantum cohomology ring of $X$. Namely, on the basis elements we put
\begin{align}\label{Eq.: Quantum cohomology}
\Delta_a \star \Delta_b = \sum_c \Phi_{abc} \Delta^c,
\end{align}
where $\Phi_{abc}=\frac{\partial^3 \Phi}{\partial t_a \partial t_b \partial t_c}$, and $\D^0, \dots, \D^s$ is the basis dual to  $\D_0, \dots, \D_s$ with respect to the Poincar\'e pairing. Expression \eqref{Eq.: Quantum cohomology} is naturally interpreted as an element of $H^*(X, \bQ)\otimes_{\bQ} K[[t_0, \dots, t_s]]$.

\smallskip

It is well known that \eqref{Eq.: Quantum cohomology} makes $H^*(X, \bQ)\otimes_{\bQ} K[[t_0, \dots, t_s]]$ into a commutative, associative, graded $K[[t_0, \dots, t_s]]$-algebra with the identity element $\D_0$. We will denote this algebra $\BQH(X)$. For convenience we recall the definition of the grading:
\begin{align*}
& \deg(\D_i)=|\D_i|, \quad \deg(q)= \text{index} \, (X), 
\quad \deg(t_i)=1-|\D_i|,
\end{align*} 
where $|\D_i|$ is the Chow degree of $\D_i$ and $\text{index} \, (X)$ is the largest integer  $n$ such that $-K_X = nH$ for some ample divisor $H$ on $X$. 

The algebra $\BQH(X)$ is called the \textit{big quantum cohomology algebra} of $X$ to distinguish it from a simpler object called the \textit{small quantum cohomology algebra} which is the quotient of $\BQH(X)$ with respect to the ideal $(t_0, \dots, t_s)$. We will denote the latter $\QH(X)$ and use $\starz$ instead of $\star$ for the product in this algebra. It is a finite dimensional $K$-algebra. Equivalently one can say that $\QH(X)=H^*(X,\bQ) \otimes_{\bQ} K = H^*(X,K)$ as a vector space, and the $K$-algebra structure is defined by putting $\Delta_a \starz \Delta_b = \sum_c \langle \D_a, \D_b, \D_c \rangle \Delta^c$.

\subsubsection{Remark}

We are using a somewhat non-standard notation $\BQH(X)$ for the big quantum cohomology and $\QH(X)$ for the small quantum cohomology to stress the difference between the two. Note that this notation is different from the one used in \cite{GMS} and is closer to the notation of \cite{Pe}.

\subsubsection{Remark} 
\label{SubSubSec.: Remark on difference of notation with Manin's book}

The above definitions look slightly different from the ones given in~\cite{Ma}. The differences are of two types. The first one is that $\QH(X)$ and $\BQH(X)$ are in fact defined already over the ring $R$ and not only over $K$. We pass to $K$ from the beginning, since in this paper we are only interested in generic semisimplicity of quantum cohomology. The second difference is that in some papers on quantum cohomology one unifies the coordinate $q$ with those coordinates $t_i$ which are dual to $H^2(X, \bQ)$, but the resulting structures are equivalent.

\subsection{Deformation picture}
\label{SubSec.: Def. Picture}

The small quantum cohomology, if considered over the ring $R$ (cf. Remark~\ref{SubSubSec.: Remark on difference of notation with Manin's book}), is a deformation of the ordinary cohomology algebra, i.e. if you put $q=0$, then the quantum product becomes the ordinary cup-product. Similarly, the big quantum cohomology is an even bigger deformation family of algebras. Since we work not over $R$ but over $K$, we lose the point of classical limit but still retain the fact that $\BQH(X)$ is a deformation family of algebras with the special fiber being $\QH(X)$. 

Throughout this paper we view $\spec(\BQH(X))$ as a deformation family of zero-dimensional schemes over $\spec (K[[t_0,\dots, t_s]])$. In the base of the deformation we consider the following two points: the origin (the closed point given by the maximal ideal $(t_0, \dots , t_n)$) and the generic point $\eta$. The fiber of this family over the origin is the spectrum of the small quantum cohomology $\spec(\QH(X))$. The fiber over the generic point will be denoted by $\spec(\BQH(X)_{\eta})$. It is convenient to summarize this setup in the diagram
\begin{align}\label{Eq.: BQH family}
\xymatrix{
\spec(\QH(X)) \ar[d] \ar[r]   &    \spec(\BQH(X)) \ar[d]^{\pi} & \ar[l] \spec(\BQH(X)_{\eta}) \ar[d]^{\pi_{\eta}} \\
\spec (K)    \ar[r]      & \spec (K[[t_0, \dots, t_s]])  &   \ar[l] \eta
}
\end{align}
where both squares are Cartesian. 
    
By construction $\BQH(X)$ is a free module of finite rank over $K[[t_0, \dots,t_s]]$. Therefore, it is a noetherian semi-local $K$-algebra which is flat and finite over $K[[t_0, \dots,t_s]]$. Note that neither $K[[t_0, \dots,t_s]]$ nor $\BQH(X)$ are finitely generated over the ground field $K$. Therefore, some extra care is required in the standard commutative algebra (or algebraic geometry) constructions. For example, the notion of smoothness is one of such concepts.

\subsection{Semisimplicity}
\label{SubSec.: Semisimplicity}

Let $A$ be a finite dimensional algebra over a field $F$ of characteristic zero. It is called \textit{semisimple} if it is a product of fields. Equivalently, the algebra $A$ is semisimple iff the scheme $\spec(A)$ is reduced. Another equivalent condition is to require the morphism $\spec(A) \to \spec(F)$ to be smooth.

\smallskip

\textbf{Definition:} one says that $\BQH(X)$ is \textit{generically semisimple} iff $\BQH(X)_{\eta}$ is a semisimple algebra.

\smallskip

As already described in the introduction, we will prove the generic semisimplicity of $\BQH(\IG(2,2n))$ by a version of generic smoothness in characteristic zero.

\section{Geometry of $\IG(2,2n)$}

Let $V$ be a complex vector space endowed with a symplectic form $\omega$. In this case the dimension of $V$ has to be even and we denote it by $2n$. For any $1 \leq m \leq n$ there exists an algebraic variety $\IG_{\omega}(m, V)$ that parametrizes $m$-dimensional isotropic subspaces of $V$. For $m=2$, and this is the case we are considering in this paper, it has the following explicit description. Consider the ordinary Grassmannian $\G(2,V)$ with its Pl\"ucker embedding into $\mathbb{P}(\Lambda^2V)$. The symplectic form $\omega$ defines a hyperplane $H_{\omega} \subset \mathbb{P}(\Lambda^2V)$ and the intersection of $\G(2,V)$ with $H_{\omega}$ is exactly $\IG_{\omega}(2, V)$. Thus, we have inclusions
\begin{align}\label{Eq.: Hyperplane section diagram for IG(2,2n)}
\IG_{\omega}(2, V) \subset \G(2,V) \subset  \mathbb{P}(\Lambda^2V).
\end{align}
For different $\omega$'s the varieties $\IG_{\omega}(m, V)$ are isomorphic, therefore we will often simply write $\IG(m, 2n)$.

\subsection{Special cohomology classes}

As for ordinary Grassmannians, one considers the short exact sequence of vector bundles on $X=\IG_{\omega}(2, V)$
\begin{align}\label{Eq.: Tautological S.E.S I}
0 \to \mathcal{U} \to \mathcal{V} \to \mathcal{V/U} \to 0,
\end{align}
where $\mathcal{V}$ is the trivial vector bundle with fiber $V$, $\mathcal{U}$ is the subbundle of isotropic subspaces, and $\mathcal{V/U}$ is the quotient bundle. Usually one refers to $\mathcal{U}$ and $\mathcal{V/U}$ as \textit{tautological subbundle} and \textit{tautological quotient bundle} respectively. Apart from \eqref{Eq.: Tautological S.E.S I} one can also consider the short exact sequence
\begin{align}\label{Eq.: Tautological S.E.S II}
0 \to \mathcal{U}^{\perp}/\mathcal{U} \to \mathcal{V}/\mathcal{U} \to \mathcal{V}/\mathcal{U}^\perp \to 0.
\end{align}
By taking Chern classes of vector bundles in these sequences we obtain two sets of cohomology classes which generate the cohomology ring:  

\smallskip

\textit{i)} \textit{Special Schubert classes}. These are the Chern classes of $\mathcal{V/U}$. They are indexed by integers $k \in [0,2n-2]$ and can be explicitly described as follows. Let
$$Z(E_{2n - k - 1}) = \{V_2 \in X \ | \ \dim(V_2 \cap E_{2n - k - 1}) \geq 1 \}.$$
Then $\sigma_k = [Z(E_{2n - k - 1})] \in H^{2k}(X,\ZZ)$. In the above, $E_{2n - k - 1}$ is a subspace of dimension $2n - k -1$ such that the rank of $\omega\vert_{E_{2n - k - 1}}$ is minimal. Note that $\sigma_0 = 1$ is the fundamental class of $X$. 

\smallskip

\textit{ii)} These are the Chern classes of $\mathcal{U}$ and $\mathcal{U}^{\perp}/\mathcal{U}$. Vector bundle $\mathcal{U}$ is of rank $2$, so it only has two non-vanishing Chern classes $a_i=c_i(\mathcal{U})$ for $i=1, 2$. Vector bundle $\mathcal{U}^{\perp}/\mathcal{U}$ is self-dual of rank $2n-4$, therefore it has only $n-2$ non-vanishing Chern classes $b_i= c_{2i}(\mathcal{U}^{\perp}/\mathcal{U})$ for $i\in [1, n- 2]$.

\subsection{Cohomology ring of $\IG(2,2n)$}
\label{SubSec.: Cohomology of IG}

The cohomology ring of $X$ can be neatly described in terms of generators and relations. We will give two presentations using the two sets of special cohomology classes defined above.

\begin{prop}
\label{Prop.: Presentation for coh I}

The cohomology ring $H^*(X,\QQ)$ is isomorphic to the quotient of the ring $\QQ[\sigma_1,\cdots,\sigma_{2n-2}]$ by the ideal generated by the elements
\begin{align}\label{Eq.: Eqns for H(IG), part I}
\det(\sigma_{1 + j - i})_{1 \leq i,j \leq r}, \ \ \ \textrm{with } r \in [3,2n-2]
\end{align}
and the two elements 
\begin{align}\label{Eq.: Eqns for H(IG), part II}
\sigma_{n-1}^2 + 2 \sum_{i = 1}^{n - 1} (-1)^i \sigma_{n - 1 + i} \sigma_{n - 1 - i}  \ \ \textrm{  and  }\ \  \sigma_n^2 + 2 \sum_{i = 1}^{n - 2} (-1)^i \sigma_{n + i} \sigma_{n - i}.
\end{align}
The dimension of $H^*(X, \QQ)$ is equal to $2^2\binom{n}{2}=2n(n-1)$.
\end{prop}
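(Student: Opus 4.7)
The strategy is to recognise the two families of relations as rank constraints on the tautological bundles and then verify completeness either via the Borel presentation or by a Hilbert series count.

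For the relations \eqref{Eq.: Eqns for H(IG), part I}, I would start from the identity $c(\mathcal{U})\cdot c(\mathcal{V}/\mathcal{U}) = 1$ obtained from the sequence~\eqref{Eq.: Tautological S.E.S I}, which lets one express the Chern classes of $\mathcal{U}$ as polynomials in the $\sigma_k = c_k(\mathcal{V}/\mathcal{U})$. The Jacobi--Trudi (dual) identity applied to this pair of generating series gives
$$c_r(\mathcal{U}) = \det(\sigma_{1+j-i})_{1 \le i,j \le r}.$$
Since $\mathcal{U}$ has rank $2$, these classes vanish for $r\ge 3$, so the determinants in \eqref{Eq.: Eqns for H(IG), part I} lie in the kernel of the canonical surjection $\QQ[\sigma_1,\ldots,\sigma_{2n-2}] \twoheadrightarrow H^*(X,\QQ)$.

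For \eqref{Eq.: Eqns for H(IG), part II}, I would use that the symplectic form identifies $\mathcal{V}/\mathcal{U}^\perp \cong \mathcal{U}^*$, so $c(\mathcal{U}^*)(t) = c(\mathcal{V}/\mathcal{U})(-t)^{-1}$, and thus from~\eqref{Eq.: Tautological S.E.S II}
$$c(\mathcal{U}^\perp/\mathcal{U})(t) \; = \; c(\mathcal{V}/\mathcal{U})(t)\cdot c(\mathcal{V}/\mathcal{U})(-t).$$
The bundle $\mathcal{U}^\perp/\mathcal{U}$ is self-dual of rank $2n-4$, so $c_k=0$ for $k>2n-4$; odd Chern classes vanish automatically, so the first non-trivial vanishings are in degrees $2n-2$ and $2n$. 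Expanding the product above, isolating the diagonal terms $\sigma_{n-1}^2$ and $\sigma_n^2$, and collecting the symmetric pairs $\sigma_{n-1+i}\sigma_{n-1-i}$ and $\sigma_{n+i}\sigma_{n-i}$ with their alternating signs produces, up to overall signs $(-1)^{n-1}$ and $(-1)^n$, precisely the two expressions in \eqref{Eq.: Eqns for H(IG), part II}.

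To prove that these relations are complete, I would compare with the Borel presentation $H^*(X,\QQ) = \QQ[a_1,a_2,b_1,\ldots,b_{n-2}]/I_B$, where $a_i=c_i(\mathcal{U})$, $b_j=c_{2j}(\mathcal{U}^\perp/\mathcal{U})$, and $I_B$ is generated by the coefficients of the identity $c(\mathcal{U})(t)c(\mathcal{U}^*)(t)c(\mathcal{U}^\perp/\mathcal{U})(t)=1$. The first $n-2$ coefficients let one solve recursively for $b_1,\ldots,b_{n-2}$ as polynomials in $a_1,a_2$, while the last two (in degrees $2n-2$ and $2n$) become relations purely in $\QQ[a_1,a_2]$. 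Substituting $a_1 = -\sigma_1$, $a_2 = \sigma_1^2-\sigma_2$ and using the determinantal relations to introduce $\sigma_3,\ldots,\sigma_{2n-2}$ recovers the presentation of the proposition; the dimension $2n(n-1)$ then matches the standard count of minimal coset representatives of $W/W_P$ for the parabolic of $\Sp(2n)$ stabilising an isotropic $2$-plane. The main obstacle is ensuring rigorously that this change of variables identifies the two ideals (rather than producing only some of the extra relations), which is a somewhat delicate bookkeeping exercise; a safer backup is to compute the Hilbert series of the quotient ring and compare it directly with the known Poincar\'e polynomial of $\IG(2,2n)$.
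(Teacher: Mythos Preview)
Your outline is mathematically sound, but it is worth noting that the paper does not actually prove this proposition: it simply cites \cite[Theorem~1.2]{BKT}. So there is no ``paper's own proof'' to compare with beyond the reference.

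That said, your sketch is essentially the standard argument (and close in spirit to what Buch--Kresch--Tamvakis do in the general $\IG(m,2n)$ case). Your derivation of the relations is correct: the determinants in \eqref{Eq.: Eqns for H(IG), part I} arise from $c_r(\mathcal{U})=0$ for $r\ge 3$ via the inversion formula, and the two quadratic relations in \eqref{Eq.: Eqns for H(IG), part II} are exactly the degree-$(2n-2)$ and degree-$2n$ coefficients of $c(\mathcal{V}/\mathcal{U})(t)\,c(\mathcal{V}/\mathcal{U})(-t)$, up to the overall signs you mention. One small omission: that product also has nonzero coefficients in even degrees $2n+2,\ldots,4n-4$, and you should say why those extra vanishings are redundant (they are, once you also impose the determinantal relations, but it deserves a line).

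On completeness, be careful about circularity. In this paper the presentation you call the ``Borel presentation'' is Proposition~\ref{Prop.: Presentation for coh II}, and its proof \emph{uses} Proposition~\ref{Prop.: Presentation for coh I} (via the dimension count from \cite{BKT}). So within the paper's logical flow you cannot lean on it. Your fallback---computing the Hilbert series of the quotient and matching it with the known Poincar\'e polynomial of $\IG(2,2n)$, or equivalently with the count $|W^P|=2n(n-1)$ of minimal coset representatives---is the right independent route and is exactly how \cite{BKT} handle it (their Lemmas~1.1--1.2, ultimately relying on \cite{S}). If you want a self-contained argument, make that the main line rather than a backup.
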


\medskip

\begin{proof}
This is the statement of \cite[Theorem 1.2]{BKT}.
\end{proof}

\begin{prop} 
\label{Prop.: Presentation for coh II}
The cohomology ring $H^*(X,\QQ)$ is isomorphic to the quotient of the ring $\QQ[a_1,a_2,b_1,\cdots,b_{n-2}]$ by the ideal generated by
\begin{align}\label{Eq.: Relation in Presentation II}
(1 - (2a_2 - a_1^2) x^2 + a_2^2 x^4)(1 + b_1 x^2 + \cdots + b_{n-2} x^{2n-4}) = 1
\end{align}
The last equality is viewed as an equality of polynomials in the variable $x$ and gives a concise way to write a system of  equations in the variables $a_i, b_i$.
\end{prop}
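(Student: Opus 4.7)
The plan is to derive this presentation from the two tautological short exact sequences together with Proposition~\ref{Prop.: Presentation for coh I}. First, I would establish the polynomial identity as a relation between Chern classes. Since $\mathcal{V}$ is trivial, \eqref{Eq.: Tautological S.E.S I} gives $c(\mathcal{U})\,c(\mathcal{V}/\mathcal{U}) = 1$. The symplectic form on $V$ identifies $\mathcal{V}/\mathcal{U}^{\perp}$ with $\mathcal{U}^{*}$, so \eqref{Eq.: Tautological S.E.S II} yields $c(\mathcal{V}/\mathcal{U}) = c(\mathcal{U}^{\perp}/\mathcal{U})\cdot c(\mathcal{U}^{*})$. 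Combining,
\begin{equation*}
c(\mathcal{U}^{\perp}/\mathcal{U}) \cdot c(\mathcal{U}) \cdot c(\mathcal{U}^{*}) = 1 \quad \text{in } H^{*}(X, \QQ).
\end{equation*}
Writing Chern polynomials in a formal degree variable $x$ and expressing $c_x(\mathcal{U})\,c_x(\mathcal{U}^{*})$ through the Chern roots of $\mathcal{U}$ (which produces a polynomial in $a_1^2 - 2 a_2$ and $a_2^2$) yields exactly the first factor of the displayed identity, up to the paper's sign convention. This gives a well-defined ring homomorphism $\Phi \colon \QQ[a_1, a_2, b_1, \ldots, b_{n-2}]/I \to H^{*}(X, \QQ)$, where $I$ is the ideal generated by the coefficients of positive powers of $x$ in the relation.

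Surjectivity of $\Phi$ then follows from Proposition~\ref{Prop.: Presentation for coh I}: the classes $\sigma_k = c_k(\mathcal{V}/\mathcal{U})$ generate $H^{*}(X, \QQ)$, and the factorization $c(\mathcal{V}/\mathcal{U}) = c(\mathcal{U}^{\perp}/\mathcal{U})\,c(\mathcal{U}^{*})$ expresses every $\sigma_k$ as an explicit polynomial in $a_1, a_2, b_1, \ldots, b_{n-2}$ (for instance $\sigma_{2k} = b_k + a_2 b_{k-1}$ and $\sigma_{2k+1} = -a_1 b_k$, with appropriate signs). For injectivity I would construct the inverse as a ring map
\begin{equation*}
\Psi \colon \QQ[\sigma_1, \ldots, \sigma_{2n-2}]/J \longrightarrow \QQ[a_1, a_2, b_1, \ldots, b_{n-2}]/I
\end{equation*}
sending each $\sigma_k$ to the polynomial above. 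Well-definedness requires that the generators of $J$ land in $I$: the determinantal generators encode $c_r(\mathcal{U}) = 0$ for $r \geq 3$, which is automatic since $c(\mathcal{U}) = 1 + a_1 + a_2$ is formally of rank~$2$ in the source; the two symplectic generators should correspond to the vanishings $c_{2(n-1)}(\mathcal{U}^{\perp}/\mathcal{U}) = 0$ and $c_{2n}(\mathcal{U}^{\perp}/\mathcal{U}) = 0$ of the would-be higher Chern classes of the rank-$(2n-4)$ bundle $\mathcal{U}^{\perp}/\mathcal{U}$, and in the source those are precisely the coefficients of $x^{2(n-1)}$ and $x^{2n}$ in the displayed product, i.e.\ generators of $I$. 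Once $\Psi$ is well-defined, Proposition~\ref{Prop.: Presentation for coh I} identifies its source with $H^{*}(X, \QQ)$ and $\Phi \circ \Psi$ is the identity on $H^{*}(X, \QQ)$, forcing $\Phi$ to be an isomorphism.

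The main obstacle is the last verification---matching the two symplectic generators of $J$ (the quadratic identities in $\sigma_{n-1}$ and $\sigma_n$) with the coefficient relations at $x^{2(n-1)}$ and $x^{2n}$ in $I$ after substituting each $\sigma_k$ by its polynomial in $a_i, b_j$. Doing this cleanly, rather than by brute expansion, requires a careful use of the Jacobi-Trudi-type inversion between the $b_i$'s and the $\sigma_k$'s together with the self-duality $(\mathcal{U}^{\perp}/\mathcal{U})^{*} \cong \mathcal{U}^{\perp}/\mathcal{U}$ coming from the symplectic structure. An alternative to this identification is a direct graded dimension count: the first $n-2$ coefficient relations in $I$ express $b_1, \ldots, b_{n-2}$ recursively as polynomials in $a_1, a_2$, reducing the source to $\QQ[a_1, a_2]/(P, Q)$ with $\deg P = 2(n-1)$ and $\deg Q = 2n$, and if $(P, Q)$ is a regular sequence the weighted Hilbert-series computation gives exactly $2(n-1)\cdot 2n / (1\cdot 2) = 2n(n-1)$, matching Proposition~\ref{Prop.: Presentation for coh I}; regularity itself is then forced by the already-established surjectivity of $\Phi$ onto a ring of that dimension.
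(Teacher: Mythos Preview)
Your overall architecture matches the paper's exactly: verify the Chern-class identity to obtain the homomorphism, get surjectivity from the fact that the $\sigma_k$ lie in the image, and finish by a dimension count. The paper defers that last step to \cite[Lemma~1.1, Lemma~1.2]{BKT} and \cite{S}; you instead try either an explicit inverse $\Psi$ or a Hilbert-series argument.

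There is, however, a genuine gap in your Route~2. The sentence ``regularity itself is then forced by the already-established surjectivity of $\Phi$ onto a ring of that dimension'' is not valid: a surjection $R \twoheadrightarrow H^*(X,\QQ)$ onto a $2n(n-1)$-dimensional ring gives only $\dim_\QQ R \geq 2n(n-1)$, and in no way prevents $P$ and $Q$ from sharing a common factor and $R = \QQ[a_1,a_2]/(P,Q)$ from being infinite-dimensional. You must verify independently that $(P,Q)$ is a regular sequence---equivalently, that the only common zero of $P$ and $Q$ in $\overline{\QQ}^2$ is the origin. This is not hard (factor $1 + c_1 y + c_2 y^2 = (1-\alpha y)(1-\beta y)$ and observe that $h_{n-1}=h_n=0$ forces $\alpha^n=\beta^n$ and $\alpha^{n+1}=\beta^{n+1}$, hence $\alpha=\beta=0$), but it needs to be said; it is precisely the content hidden behind the paper's citation of \cite{BKT}.

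Your Route~1 is a legitimate alternative and would avoid the dimension count entirely, but as you correctly flag, the nontrivial step is checking that the two quadratic relations in the $\sigma_k$ land in $I$ after substitution. Your identification of these with ``$c_{2(n-1)}(\mathcal{U}^\perp/\mathcal{U})=0$ and $c_{2n}(\mathcal{U}^\perp/\mathcal{U})=0$'' is morally right but needs unpacking: in the source the last two generators of $I$ are $a_2^2 b_{n-3} - (2a_2-a_1^2)b_{n-2}$ and $a_2^2 b_{n-2}$, and matching these with the substituted symplectic relations is exactly the Jacobi--Trudi manipulation you allude to. Neither route is complete as written.
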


\begin{proof} 

This result is well know to specialists but we include a short proof for the convenience of the reader.

Let us start by checking that \eqref{Eq.: Relation in Presentation II} holds in the cohomology ring. Define $P(x)= 1 + a_1 x + a_2 x^2$ and $Q(x)= 1 + b_1 x^2 + \dots + b_{n-2}x^{2n-4}$ and rewrite \eqref{Eq.: Relation in Presentation II} as
$
P(x)P(-x)Q(x)=1.
$
We interpret the polynomial $P(x)$ as the total Chern class of $\mathcal{U}$ and $Q(x)$ as the total Chern class of $\mathcal{U}^{\perp}/\mathcal{U}$. Now by using basic properties of Chern classes and short exact sequences \eqref{Eq.: Tautological S.E.S I} and \eqref{Eq.: Tautological S.E.S II} it is easy to see that the above relation does hold. 

The above discussion shows that we have a natural homomorphism of $\QQ$-algebras
\begin{align}\label{Eq.: algebra hom in the proof of pres II}
\psi \colon \QQ[a_1,a_2,b_1,\cdots,b_{n-2}]/(P(x)P(-x)Q(x) -1)  \to H^*(X, \QQ)
\end{align}   
sending $a_i$'s to $a_i$'s and $b_i$'s to $b_i$'s. To prove the proposition it is enough to establish two facts: i) $\sigma_i$'s can be expressed in terms of $a_i$'s and $b_i$'s, ii) the dimensions of both algebras in \eqref{Eq.: algebra hom in the proof of pres II} are equal.  To prove the first fact one can use again simple properties of Chern classes. The proof of the second fact is given below.

First, we need to show that $\QQ[a_1,a_2,b_1,\cdots,b_{n-2}]/(P(x)P(-x)Q(x) -1)$ is finite dimensional. This can be done similarly to the finite-dimensionality part of the proof of \cite[Theorem 1.2]{BKT}: essentially it is an application of \cite[Lemma 1.2]{BKT}. Second, we need to compute the dimension of this algebra. For this we can proceed similarly to the proof of \cite[Lemma 1.1]{BKT}, which is based on \cite{S}, and get the desired $2n(2n-1)$. 

\end{proof}

\subsection{Lines on $\IG(2,2n)$} 

Here we recall the description of lines on $X$ (see \cite{manivel-landsberg} for example). Since we have an inclusion $X \subset \G(2,2n)$, a line on $X$ is also a line on the ordinary Grassmannian $\G(2,2n)$. In particular it is given by a pair $(W_1,W_3)$ of nested subspaces of dimensions $1$ and $3$ respectively. The corresponding line being the set
$$\ell(W_1,W_3) = \{ V_2 \in X \ | \ W_1 \subset V_2 \subset W_3 \}.$$
In $X$, there are two different types of lines corresponding to two $\Sp_{2n}$-orbits in the variety $Y$ of all lines on $X$:
\begin{itemize}
\item If $W_3$ is not isotropic, then $W_1$ is the kernel of the symplectic form $\omega$ on $W_3$, therefore $W_1$ is determined by $W_3$. We shall denote by $\ell(W_3)$ a line of this type. The set of these lines is the open $\Sp_{2n}$-orbit in $Y$ which we will denote by $\Yo$. 
\item If $W_3$ is isotropic, then $W_1$ is any one-dimensional subspace in $W_3$. This is the closed $\Sp_{2n}$-orbit in $Y$.
\end{itemize}

\section{Small quantum cohomology of $\IG(2,2n)$}

\subsection{Two presentations}

As was already mentioned in Section \ref{SubSec.: Cohomology of IG}, the ordinary cohomology ring of the Grassmannian $\IG(2,2n)$ is generated by the special Schubert classes $\sigma_1, \dots , \sigma_{2n-2}$ with relations \eqref{Eq.: Eqns for H(IG), part I} and \eqref{Eq.: Eqns for H(IG), part II}. Since $\deg(q) = 2n-1$, the only relation that needs to be modified is the second equation in \eqref{Eq.: Eqns for H(IG), part II}. Moreover, up to a constant factor, this modification is unique for degree reasons. The complete answer for arbitrary Grassmannians $\IG(m,2n)$ was given in \cite[Theorem 1.5]{BKT} which we reproduce here in the special case of $m=2$.

\begin{thm}[Buch-Kresch-Tamvakis]
\label{thm-bkt}
The small quantum cohomology ring $\QH(X)$ is isomorphic to the quotient of the ring $K[\sigma_1,\cdots,\sigma_{2n-2}]$ by the ideal generated by the elements
$$\det(\sigma_{1 + j - i})_{1 \leq i,j \leq r}, \ \ \ \textrm{with } r \in [3,2n-2]$$
and the two elements 
$$\sigma_{n-1}^2 + 2 \sum_{i = 1}^{n - 1} (-1)^i \sigma_{n - 1 + i} \sigma_{n - 1 - i}  \ \ \textrm{  and  }\ \  \sigma_n^2 + 2 \sum_{i = 1}^{n - 2} (-1)^i \sigma_{n + i} \sigma_{n - i} + (-1)^{n+1} q \sigma_{1}.$$
\end{thm}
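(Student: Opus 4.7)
The strategy is to deform the classical presentation from Proposition~\ref{Prop.: Presentation for coh I}: for each defining relation, I would determine the possible quantum corrections by degree considerations, and then pin down the one correction that actually appears.

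Step 1 (degree constraints). Work in the Chow grading, so that $\deg\sigma_k = k$ and $\deg q = 2n-1$ (this is forced by the requirement that $\starz$ be grading-preserving, together with the fact that the Fano index of $X$ is $2n-1$). If $R(\sigma_\bullet)$ is any polynomial in the special Schubert classes that vanishes in $H^*(X,\QQ)$, its evaluation in $\QH(X)$ lies in $q\cdot H^*(X,\QQ)[q]$ and can be expanded as
\[
R(\sigma_\bullet) = \sum_{e\ge 1} q^{e}\,R_e(\sigma_\bullet), \qquad \deg R_e = \deg R - e(2n-1).
\]
Applied to the three families of classical relations: the Pieri-determinant relation of size $r\in[3,2n-2]$ is homogeneous of degree $r\le 2n-2<2n-1$, so all $R_e$ vanish; the first relation in \eqref{Eq.: Eqns for H(IG), part II} has degree $2n-2$, so again no correction appears; only the second relation in \eqref{Eq.: Eqns for H(IG), part II}, of degree $2n$, admits a correction, and since $2n-(2n-1)=1$ it must be of the form $c\,q\,\sigma_1$ for a single unknown constant $c\in\QQ$.

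Step 2 (computing $c$). The constant $c$ is encoded in a specific combination of genus-zero, degree-one Gromov--Witten invariants governing the products $\sigma_a\starz\sigma_b$ with $a+b=2n$. To evaluate these, I would use the description of lines on $\IG(2,2n)$ from Section~2: every degree-one stable map is a line $\ell(W_1,W_3)$, and the variety of lines decomposes into the open $\Sp_{2n}$-orbit $\Yo$ of lines with non-isotropic $W_3$ and the closed orbit of lines inside an isotropic $W_3$. A standard evaluation-map analysis together with the projection $(W_1,W_3)\mapsto W_3$ realizes the relevant count as a classical intersection on the $W_3$-parameter space (a ``quantum-to-classical'' reduction), and a direct Schubert-calculus computation yields $c=(-1)^{n+1}$.

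Step 3 (conclusion). By Proposition~\ref{Prop.: Presentation for coh I} the classes $\sigma_1,\ldots,\sigma_{2n-2}$ generate $H^*(X,\QQ)$ as a $\QQ$-algebra; since $\QH(X)$ coincides with $H^*(X,\QQ)\otimes_{\QQ} K$ as a $K$-module, they also generate $\QH(X)$ as a $K$-algebra, producing a surjection from $K[\sigma_1,\ldots,\sigma_{2n-2}]$ onto $\QH(X)$ that factors through the candidate presentation by Steps 1--2. A monomial-basis argument identical to the one used in the proof of Proposition~\ref{Prop.: Presentation for coh I} (invoking \cite[Lemma 1.2]{BKT}) bounds the $K$-dimension of the candidate quotient above by $2n(n-1)$, which combined with $\dim_K\QH(X)=2n(n-1)$ forces the surjection to be an isomorphism.

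The main obstacle is Step 2. Steps 1 and 3 are essentially formal: degree bookkeeping dictates the shape of the theorem, and the dimension count is standard. What requires genuine enumerative input is verifying that the single remaining quantum correction $c\,q\,\sigma_1$ is nonzero and has exactly the sign $(-1)^{n+1}$; this is where the symplectic structure of $X$, via the dichotomy of lines, enters the proof in an essential way.
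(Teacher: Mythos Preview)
The paper does not supply its own proof of this theorem: it is stated as a citation of \cite[Theorem~1.5]{BKT} and carries no proof environment. The paragraph immediately preceding the statement already records your Step~1 verbatim (``Since $\deg(q)=2n-1$, the only relation that needs to be modified is the second equation in \eqref{Eq.: Eqns for H(IG), part II}. Moreover, up to a constant factor, this modification is unique for degree reasons.''), and then defers the determination of the constant and the remaining verification entirely to \cite{BKT}.

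Your outline is a correct reconstruction of how such a proof goes, and your identification of Step~2 as the only substantive step is accurate. Two minor remarks. First, in Step~3 you invoke ``the proof of Proposition~\ref{Prop.: Presentation for coh I}'', but in this paper that proposition is also proved by citation; the monomial/dimension argument you have in mind is the one alluded to in the proof of Proposition~\ref{Prop.: Presentation for coh II}, ultimately \cite[Lemmas~1.1--1.2]{BKT}. Second, your Step~2 is a plan rather than a computation: to actually extract $c=(-1)^{n+1}$ one must compute the degree-one quantum corrections in each product $\sigma_{n+i}\starz\sigma_{n-i}$ and sum them with the alternating signs from the relation, which in \cite{BKT} is handled via the quantum Pieri rule rather than a direct line count. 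None of this is a gap in your strategy, but be aware that the paper itself contains nothing to compare Step~2 against.
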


\medskip

Combining the above theorem with Proposition \ref{Prop.: Presentation for coh II} we arrive at the following statement.
\begin{cor}\label{Cor.: Presentation for small QH}
The small quantum cohomology ring $\QH(X)$ is isomorphic to the quotient of the ring $K[a_1,a_2,b_1,\cdots,b_{n-2}]$ by the ideal generated by 
\begin{align}\label{Eq.: Relations for small QH II}
(1 - (2a_2 - a_1) x^2 + a_2^2 x^4)(1 + b_1 x^2 + \cdots + b_{n-2}
  x^{2n-4}) = 1 + q a_1 x^{2n}.
\end{align}


\end{cor}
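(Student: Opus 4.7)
The plan is to combine Theorem~\ref{thm-bkt} with Proposition~\ref{Prop.: Presentation for coh II}, tracking how the single BKT quantum correction $(-1)^{n+1} q \sigma_1$ deforms the coefficients of $P(x)P(-x)Q(x) - 1$. Since only one classical BKT relation is modified in the small quantum cohomology, at most one coefficient of $P(x)P(-x)Q(x) - 1$, viewed as a polynomial in $a_1, a_2, b_1, \ldots, b_{n-2}$, should need a quantum correction, and the task is to identify it.

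I would start by considering the natural surjection
\[
\phi\colon K[a_1, a_2, b_1, \ldots, b_{n-2}][q] \twoheadrightarrow \QH(X)
\]
sending each generator to its geometric counterpart. At $q = 0$ its kernel is generated by the coefficients in $x$ of $P(x)P(-x)Q(x) - 1$ by Proposition~\ref{Prop.: Presentation for coh II}, so by flatness of small quantum cohomology over $K[q]$ the quantum kernel is a graded deformation of this classical ideal. A degree argument then pins it down: with $\deg a_i = i$, $\deg b_j = 2j$, $\deg q = 2n - 1$, and $\deg x = -1$, the $x^{2k}$-coefficient of $P(x)P(-x)Q(x) - 1$ is homogeneous of degree $2k$, so any $q$-correction is a multiple of $q$ times a polynomial of degree $2k - 2n + 1$. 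This degree is negative for $k < n$ and equals $1$ for $k = n$, so the only possible correction is $\alpha \, q \, a_1$ in the $x^{2n}$-coefficient for some $\alpha \in K$; higher powers of $q$ are excluded by the same accounting.

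To fix $\alpha$ I would switch back to Schubert classes via the identity $\sum_i \sigma_i x^i = 1/P(x)$ coming from the tautological sequence \eqref{Eq.: Tautological S.E.S I}. Setting $R(x) = \sum_i \sigma_i x^i$, dividing the proposed identity $P(x)P(-x)Q(x) = 1 + \alpha q a_1 x^{2n}$ by $P(x) P(-x)$, and extracting the coefficient of $x^{2n}$ rewrites it, modulo the unperturbed lower-order relations, as a relation among Schubert classes which, after applying $\sigma_1 = -a_1$, matches the quantum BKT relation of Theorem~\ref{thm-bkt} precisely when $\alpha = 1$. Since the source $K[a, b, q]/(\text{deformed coefficients})$ has the correct rank (inherited from the classical dimension count in Proposition~\ref{Prop.: Presentation for coh II}), $\phi$ descends to an isomorphism.

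The main technical obstacle is this coefficient-matching step: the expansion of the $x^{2n}$-coefficient of $R(x)R(-x)$ produces not only the expression $(-1)^n\bigl[\sigma_n^2 + 2 \sum_{k=1}^{n-2}(-1)^k \sigma_{n+k}\sigma_{n-k}\bigr]$ but also spurious terms involving $\sigma_{2n-1}$ and $\sigma_{2n}$, and one must verify that these terms are absorbed by the lower-order classical coefficient relations so that the comparison with the single modified BKT relation is clean.
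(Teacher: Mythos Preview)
The paper provides no proof of this corollary; it simply asserts that it follows by combining Theorem~\ref{thm-bkt} with Proposition~\ref{Prop.: Presentation for coh II}. Your proposal is precisely the argument one would write down to justify that sentence, and the overall strategy is correct: the grading (with $\deg q = 2n-1$) forces the coefficients of $x^{2}, \ldots, x^{2n-2}$ in $P(x)P(-x)Q(x)-1$ to be undeformed, leaves the $x^{2n}$-coefficient deformable only by a scalar multiple of $q a_1$, and a comparison with the single modified BKT relation pins down that scalar.

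Two minor remarks. First, writing the source ring as $K[a_1,a_2,b_1,\ldots,b_{n-2}][q]$ is slightly off in this paper's conventions, since $q$ already lies in $K$; the grading argument you want is really the one over $R=\QQ[[q]]$ (or, equivalently, the homogeneity of the relations inherited from the dimension axiom for Gromov--Witten invariants), after which one base-changes to $K$. Second, the ``spurious'' contributions $\sigma_{2n-1}$ and $\sigma_{2n}$ you worry about are not Schubert classes at all---they are simply the degree-$(2n{-}1)$ and degree-$2n$ coefficients of the formal series $1/P(x)$, i.e.\ specific polynomials in $a_1,a_2$. A cleaner bookkeeping is to note that $Q(x)=R(x)/P(-x)$ in $H^*(X)$ (from \eqref{Eq.: Tautological S.E.S II} and $\mathcal{V}/\mathcal{U}^\perp\cong\mathcal{U}^*$), so the $x^{2n}$-coefficient relation $a_2^2 b_{n-2}=0$ is, modulo the lower-degree relations, exactly a rewriting of the classical degree-$2n$ BKT relation; transporting the quantum correction $(-1)^{n+1}q\sigma_1=(-1)^n q(-a_1)$ through this identification gives the claimed $+q a_1$. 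This avoids expanding $R(x)R(-x)$ altogether and makes the coefficient-matching step essentially a one-line translation.
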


\subsection{Structure of $\QH(X)$}
\label{SubSec.: Structure of QH}

In this paragraph we will study the decomposition of the $K$-algebra $\QH(X)$ into the direct product, or, equivalently, the decomposition of the $K$-scheme $\spec(\QH(X))$ into connected components.

We will use the presentation of $\QH(X)$ described in Corollary \ref{Cor.: Presentation for small QH}. Thus, the $K$-scheme $\spec(\QH(X))$ is given as a closed subscheme of the affine space $\mathbb{A}^n =\spec (K[a_1,a_2, b_1, \dots, b_{n-2}])$ defined by the equations
\begin{align}\label{Eq.: System for QH(IG(2,2n))} \notag
& 2a_2 -a_1^2 + b_1 = 0 \\ \notag
& a_2^2 + b_1(2a_2 -a_1^2)+b_2 =0 \\ \notag
& b_1a_2^2 + b_2(2a_2 -a_1^2)+b_3 =0 \\ 
& \dots \\ \notag
& b_{i-1} a_2^2 + b_i(2a_2 -a_1^2) + b_{i+1} =0 \\ \notag
& \dots \\ \notag
& b_{n-4} a_2^2 + b_{n-3}(2a_2 -a_1^2) + b_{n-2} =0 \\ \notag
& b_{n-3} a_2^2 + b_{n-2}(2a_2 -a_1^2)  =0 \\ \notag
& b_{n-2}a_2^2 - qa_1 =0 .
\end{align}
It is clear that the origin of $\mathbb{A}^n$ is a solution of this system. Moreover, this solution corresponds to a fat point of $\spec(\QH(X))$. Indeed, it is easy to see that the Zariski tangent space of \eqref{Eq.: System for QH(IG(2,2n))} at the origin is one-dimensional. Thus, the origin is a fat point of $\spec(\QH(X))$. Let $A$ be the corresponding factor of $\QH(X)$. Thus, we have the direct product decomposition
\begin{align}\label{Eq.: Direct product decomposition for QH}
\QH(X) = A \times B,
\end{align}
where $B$ corresponds to components of $\spec(\QH(X))$ supported outside of the origin.

To determine the structure of $A$ we look for solutions of \eqref{Eq.: System for QH(IG(2,2n))} in the ring $K[\varepsilon]/\varepsilon^{n-1}$ that extend the zero solution. It is easy to see that putting $a_1=0$, letting $a_2$ be an arbitrary element of $K[\varepsilon]/\varepsilon^{n-1}$ of the form $O(\varepsilon)$, and recovering $b_i$'s by elimination  process gives rise to a solution of \eqref{Eq.: System for QH(IG(2,2n))} with the property $b_i = O(\varepsilon^i)$. Another way to phrase it is to say that we have obtained a surjective algebra homomorphism 
\begin{align}\label{Eq.: ring homo origin}
A \to K[\varepsilon]/\varepsilon^{n-1}.
\end{align}
Thus, the dimension of $A$ is at least $n-1$. Moreover, below we will see that this map is an isomorphism.

\smallskip

Now let us examine the structure of $B$, i.e. we need to study
solutions of \eqref{Eq.: System for QH(IG(2,2n))} different from the
origin\footnote{Note that from \eqref{Eq.: System for QH(IG(2,2n))} it
  is clear that variables $b_i$ can be eliminated, i.e. they are
  determined by $a_i$'s.}. It is convenient to rewrite \eqref{Eq.:
  Relations for small QH II} as 
\begin{align*}
(z^4 - (2a_2 - a_1) z^2 + a_2^2)(z^{2n-4} + b_1 z^{2n-6} + \cdots +
  b_{n-2}) = z^{2n} - a_1,
\end{align*}
where we set $q = -1$ for convenience. 
By making the substitution $a_1 = z_1 + z_2, a_2 = z_1 z_2$, and putting $Q(z)=z^{2n-4} + b_1 z^{2n-6} + \cdots + b_{n-2}$ we arrive at
\begin{align}\label{Eq.: equation in z}
(z^2 -z_1^2)(z^2 - z_2^2) Q(z) = z^{2n} - (z_1 + z_2).
\end{align}
In geometric terms this manipulation corresponds to pulling back our system \eqref{Eq.: System for QH(IG(2,2n))} with respect to the morphism
\begin{align*}
& \spec (K[z_1,z_2, b_1, \dots, b_{n-2}]) \to \spec (K[a_1,a_2, b_1, \dots, b_{n-2}]) \\
\intertext{defined by}
& a_1 \mapsto z_1 +z_2, \quad   a_2 \mapsto z_1z_2 ,   \quad    b_i \mapsto b_i.
\end{align*}
It is a double cover unramified outside of the locus $z_1=z_2$.

Let us count solutions of \eqref{Eq.: equation in z} for which $z_1 \neq z_2$ and both of them are non-zero. This reduces to counting pairs $z_1, z_2$ satisfying
\begin{align*}
& z_1^{2n}=z_1+z_2 \\
& z_2^{2n}=z_1+z_2.
\end{align*}
Eliminating $z_2$ using the first equation we obtain that $z_1$ must be a solution of
$$
(z_1^{2n}-z_1)^{2n}= z_1^{2n}.
$$
Now it is straightforward to count that there are $2(n-1)(2n-1)$ distinct pairs of numbers $(z_1, z_2)$ satisfying our conditions that we obtain in this way. 

In terms of the original system \eqref{Eq.: System for QH(IG(2,2n))} the above computation means that there are at least $(n-1)(2n-1)$ distinct solutions outside of the origin. Note that we have divided the number of solutions by 2, since the initial count was on the double cover. In other words the dimension of $B$ is at least $(n-1)(2n-1)$.

\smallskip

Up to now we have shown that $\dim_K (A) \geq n-1$ and  $\dim_K (B) \geq (n-1)(2n-1)$. Since $\dim_K(\QH(X)) = 2n(n-1)$, this implies that $\dim_K (A) = n-1$ and  $\dim_K (B) = (n-1)(2n-1)$. Hence, \eqref{Eq.: ring homo origin} is an isomorphism.  

\smallskip

The above discussion can be summarized in the following statement.

\begin{prop}
The scheme $\spec (\QH(X))$ decomposes into the disjoint union of $(2n-1)(n-1)$ reduced points $\spec(K)$ and one fat point $\spec(K[\varepsilon]/\varepsilon^{n-1})$.
\end{prop}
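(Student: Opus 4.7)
The plan is to exploit the decomposition $\QH(X) = A \times B$ already set up via the observation that the origin of $\mathbb{A}^n$ is a one-dimensional-tangent-space solution of \eqref{Eq.: System for QH(IG(2,2n))}, so that $A$ is the local Artinian $K$-algebra supported at the origin and $B$ collects the remaining factors. The goal is to show $A \cong K[\varepsilon]/\varepsilon^{n-1}$ and $B \cong K^{(n-1)(2n-1)}$, matching the total dimension $\dim_K \QH(X) = 2n(n-1) = (n-1) + (n-1)(2n-1)$ from Proposition~\ref{Prop.: Presentation for coh I}.

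For the fat-point factor, I would produce a lift of the zero solution to the test ring $K[\varepsilon]/\varepsilon^{n-1}$ that uses up all $n-1$ parameters. Concretely, set $a_1 = 0$ and allow $a_2$ to range freely over $\varepsilon\cdot K[\varepsilon]/\varepsilon^{n-1}$; the triangular shape of the system \eqref{Eq.: System for QH(IG(2,2n))} then determines $b_1,\dots,b_{n-2}$ recursively, and an easy induction shows $b_i \in (\varepsilon^i)$, so the last equation $b_{n-2}a_2^2 - q a_1 = 0$ holds automatically in $K[\varepsilon]/\varepsilon^{n-1}$ since $a_1 = 0$ and $b_{n-2}a_2^2 \in (\varepsilon^{n})$. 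This produces a surjection $A \twoheadrightarrow K[\varepsilon]/\varepsilon^{n-1}$, and hence $\dim_K A \geq n-1$.

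For the étale part, I would count $K$-points of $\spec(B)$ via the double-cover trick. Substituting $a_1 = z_1+z_2$, $a_2 = z_1 z_2$ (and normalizing $q=-1$) turns \eqref{Eq.: Relations for small QH II} into
\begin{equation*}
(z^2 - z_1^2)(z^2 - z_2^2)\,Q(z) = z^{2n} - (z_1 + z_2),
\end{equation*}
from which, away from the ramification locus $z_1 = z_2$ and with $z_1, z_2 \neq 0$, one reads off the two conditions $z_1^{2n} = z_1+z_2$ and $z_2^{2n} = z_1+z_2$. Eliminating $z_2$ reduces the problem to the single polynomial equation $(z_1^{2n}-z_1)^{2n} = z_1^{2n}$ in one variable; a direct count of its roots (discarding $z_1 = 0$ and the degenerate cases $z_1 = z_2$) yields $2(n-1)(2n-1)$ ordered pairs $(z_1,z_2)$, which after dividing by the degree of the cover gives $(n-1)(2n-1)$ distinct geometric points downstairs, so $\dim_K B \geq (n-1)(2n-1)$.

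The final step is the dimension squeeze: since $(n-1) + (n-1)(2n-1) = 2n(n-1) = \dim_K \QH(X)$, both lower bounds are equalities. This simultaneously forces the surjection $A \twoheadrightarrow K[\varepsilon]/\varepsilon^{n-1}$ to be an isomorphism and forces $B$ to be reduced of length $(n-1)(2n-1)$, i.e. a product of $(n-1)(2n-1)$ copies of $K$. The main obstacle I anticipate is the root-counting in the last paragraph: one must verify that the $4n^2$ roots of $(z_1^{2n}-z_1)^{2n} = z_1^{2n}$ split correctly into the contributions $z_1 = 0$, $z_1 = z_2$, and the $2(n-1)(2n-1)$ genuinely non-degenerate pairs, without any hidden coincidences under the involution $z_1 \leftrightarrow z_2$. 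Once this bookkeeping is under control, the rest is a clean dimension match.
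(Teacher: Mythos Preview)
Your proposal is correct and follows essentially the same route as the paper: the same surjection $A\twoheadrightarrow K[\varepsilon]/\varepsilon^{n-1}$ via $a_1=0$, the same double-cover substitution $a_1=z_1+z_2$, $a_2=z_1z_2$ leading to $(z_1^{2n}-z_1)^{2n}=z_1^{2n}$, and the same dimension squeeze to finish. The only point you flag as an obstacle---the root bookkeeping---is precisely what the paper leaves as ``straightforward to count'', so you are in complete agreement with the authors' argument.
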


\section{Four-point Gromov-Witten invariants}

In Section \ref{Sec.: BQH} we will study the deformation of $\QH(X)$ in the big quantum cohomology $\BQH(X)$ in a very special direction, namely in the direction $\sigma_2$. In particular we shall need to compute the following invariants:
$$I_1(\pt,\sigma_2,\sigma_i,\sigma_j).$$
Note first that for dimension reasons these invariants vanish except if we have $i + j = 2n-2$. We shall also use the following result which simply follows from Kleiman-Bertini's Theorem \cite{kleiman} (see also \cite[Lemma 14]{fulton-pandharipande}).

\begin{lemma}
The Gromov-Witten invariant $I_1(\pt,\sigma_2,\sigma_i,\sigma_j)$ is the number of lines meeting general representatives of the cohomology classes $\pt$, $\sigma_2$, $\sigma_i$ and $\sigma_j$. Furthermore, given any open dense subset of the set of lines, the above lines can be chosen in this open subset.
\end{lemma}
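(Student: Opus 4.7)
The plan is to interpret $I_1(\pt,\sigma_2,\sigma_i,\sigma_j)$ via the moduli space of four-pointed stable maps of line class, and to apply Kleiman-Bertini to convert it into an enumerative count. Since $X = \IG(2,2n)$ is a rational homogeneous variety under $G = \Sp_{2n}$, it is convex; hence the Kontsevich moduli space $\overline{M}_{0,4}(X,1)$ is a smooth irreducible proper $G$-variety of expected dimension $\dim X + \int_\ell c_1(T_X) + 1 = (4n-5) + (2n-1) + 1 = 6n-5$. It carries four $G$-equivariant evaluation maps $\mathrm{ev}_k \colon \overline{M}_{0,4}(X,1) \to X$ and a $G$-equivariant forgetful morphism $\pi \colon \overline{M}_{0,4}(X,1) \to Y = \overline{M}_{0,0}(X,1)$.

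First I would fix Schubert representatives $Z_1 = \pt$, $Z_2$, $Z_i$, $Z_j$ of the four classes and compute $I_1(\pt,\sigma_2,\sigma_i,\sigma_j)$ as the degree of $\prod_k \mathrm{ev}_k^*[Z_k]$ on $\overline{M}_{0,4}(X,1)$. The product map $(\mathrm{ev}_1,\ldots,\mathrm{ev}_4) \colon \overline{M}_{0,4}(X,1) \to X^4$ is $G^4$-equivariant and $G^4$ acts transitively on $X^4$, so by Kleiman-Bertini, for general $(g_1,\ldots,g_4) \in G^4$ the scheme $Z := \bigcap_{k=1}^4 \mathrm{ev}_k^{-1}(g_k Z_k)$ is reduced, transverse, and zero-dimensional, and its cardinality equals the Gromov-Witten invariant. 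A point of $Z$ is a pair consisting of a line $\ell$ and marked points $p_k \in \ell \cap g_k Z_k$. The dimensional condition $i+j = 2n-2$ is precisely what makes the expected dimension of $Z$ equal to zero, and a standard incidence argument---applying Kleiman-Bertini on $X$ to the universal line $\mathcal{L} \subset Y \times X$---shows that for general translates each $g_k Z_k$ intersects a line counted in $Z$ transversely in a single point. Consequently the 4-tuple of marked points is determined by $\ell$ alone, and $|Z|$ equals the number of lines on $X$ meeting all four general translates.

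For the last assertion, let $W \subseteq Y$ be an open dense subset with complement $V$. Then $\pi^{-1}(V) \subset \overline{M}_{0,4}(X,1)$ is a proper closed $G$-invariant subvariety, hence of strictly smaller dimension. A second application of Kleiman-Bertini---this time to the zero-dimensional transverse intersection $Z$ and the positive-codimensional subvariety $\pi^{-1}(V)$---shows that, after shrinking the locus of admissible translates if necessary, $Z \cap \pi^{-1}(V) = \emptyset$, so every line counted lies in $W$. The main delicate point is ensuring transversality up to the boundary of $\overline{M}_{0,4}(X,1)$ and verifying that no line contributes with multiplicity greater than one to $|Z|$; both are handled exactly as in \cite[Lemma 14]{fulton-pandharipande}, which we would invoke to conclude.
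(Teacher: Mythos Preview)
Your approach is the same as the paper's: invoke Kleiman--Bertini transversality (and \cite[Lemma~14]{fulton-pandharipande}) on the evaluation maps from the moduli space of stable maps to $X$. The paper does not spell out the argument beyond that citation, so your expansion is welcome.

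Two slips to fix. First, the product evaluation map $(\mathrm{ev}_1,\ldots,\mathrm{ev}_4)\colon \overline{M}_{0,4}(X,1)\to X^4$ is \emph{not} $G^4$-equivariant: only the diagonal copy of $G=\Sp_{2n}$ acts on $\overline{M}_{0,4}(X,1)$, so you cannot directly apply Kleiman--Bertini for the transitive $G^4$-action on $X^4$. The standard remedy is to apply Kleiman's theorem iteratively: for general $g_1$ the preimage $\mathrm{ev}_1^{-1}(g_1 Z_1)$ is generically reduced of the expected dimension; then apply the theorem again to $\mathrm{ev}_2$ restricted to this locus, and so on. This is exactly what \cite[Lemma~14]{fulton-pandharipande} does, so your final citation repairs the argument, but the sentence as written is incorrect.

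Second, for an arbitrary dense open $W\subset Y$ the complement $V$ need not be $G$-stable, so $\pi^{-1}(V)$ is not $G$-invariant in general. Fortunately you do not need this: it suffices that $\pi^{-1}(V)$ has strictly smaller dimension than $\overline{M}_{0,4}(X,1)$, and then the same iterative Kleiman--Bertini argument applied to the evaluation maps restricted to $\pi^{-1}(V)$ shows that for general translates the expected dimension of $Z\cap\pi^{-1}(V)$ is negative, hence the intersection is empty. Delete the word ``$G$-invariant'' and the argument goes through.
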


As a consequence, we may only consider lines in $\Yo$, the open orbit of the variety $Y$ of lines on $X$ in order to compute the above invariants. For $Z \subset X$ a subvariety in $X$, we define
$$\Yo(Z) = \{ \ell \in \Yo \ | \ \ell \cap Z \neq \emptyset \}.$$

\begin{lemma}
We have the following equalities
$$\begin{array}{l}
\Yo(\{E_2\}) = \{ \ell(W_3) \in \Yo \ | \ E_2 \subset W_3 
\} \simeq \p(\C^{2n}/E_2) \setminus \p(E_2^\perp/E_2) \\
\Yo(Z(E_{2n - k -1})) = \{ \ell(W_3) \in \Yo \ | \ \dim(W_3 \cap E_{2n - k - 1}) \geq 1 \}. \\
\end{array}$$
For $E_2$ and $E_{2n - k - 1}$ in general position, their intersection is isomorphic to
$$\p(E_2 + E_{2n - k - 1}/E_2) \setminus \p((E_2 + E_{2n - k - 1}) \cap E_2^\perp/E_2)\subset \p(\C^{2n}/E_2) \setminus \p(E_2^\perp/E_2).$$
\end{lemma}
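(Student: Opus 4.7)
The plan is to unpack the definitions and exploit the description of lines in the open orbit $\Yo$ as $\ell(W_3)$ with $W_1 = W_3 \cap W_3^\perp$ the one-dimensional radical of $\omega|_{W_3}$. The one non-obvious observation I will use throughout is the following: if $E_2 \subset W_3$ is a $2$-dimensional isotropic subspace of a non-isotropic $3$-dimensional $W_3$, then $W_1 \subset E_2$. Indeed, $W_3/W_1$ is a $2$-dimensional symplectic vector space, whose only isotropic subspaces are the lines, so the image of $E_2$ in $W_3/W_1$ has dimension at most $1$, forcing $W_1 \subset E_2$. This is the main (very small) obstacle; everything else is formal linear algebra.

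For the first equality, $\ell(W_3) \in \Yo(\{E_2\})$ means $E_2 \in \ell(W_3)$, i.e.\ $W_1 \subset E_2 \subset W_3$. By the observation above the condition $W_1 \subset E_2$ is automatic, so the set reduces to those $W_3$ with $E_2 \subset W_3$. Such a $W_3$ is the same thing as a line in $V/E_2$, and $W_3$ fails to lie in $\Yo$ precisely when it is isotropic, i.e.\ when the extra direction lies in $E_2^\perp$; removing these gives the identification with $\p(\C^{2n}/E_2) \setminus \p(E_2^\perp/E_2)$.

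For the second equality, the direction $\Rightarrow$ is trivial: any $V_2 \in \ell(W_3) \cap Z(E_{2n-k-1})$ satisfies $V_2 \cap E_{2n-k-1} \subset W_3 \cap E_{2n-k-1}$. For $\Leftarrow$, take $0 \neq v \in W_3 \cap E_{2n-k-1}$. If $v \in W_1$, any $V_2 \in \ell(W_3)$ already contains $v$ and hence meets $E_{2n-k-1}$. Otherwise let $V_2 = W_1 + \langle v \rangle$; it is $2$-dimensional, lies between $W_1$ and $W_3$, and is isotropic because $W_1 \subset W_3^\perp$ kills every vector of $W_3$, in particular $\omega(W_1,v)=0$.

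For the last claim, assume general position so that $E_2 \cap E_{2n-k-1} = 0$. A point of $\Yo(\{E_2\})$ corresponds to $[v] \in \p(V/E_2) \setminus \p(E_2^\perp/E_2)$ with $W_3 = E_2 + \langle v\rangle$, and by the second equality it belongs to $\Yo(Z(E_{2n-k-1}))$ iff $W_3 \cap E_{2n-k-1} \neq 0$. Writing a nonzero element of this intersection as $e + \lambda v$ with $e \in E_2$ and using $E_2 \cap E_{2n-k-1} = 0$ forces $\lambda \neq 0$, so the condition is equivalent to $[v] \in \p((E_2 + E_{2n-k-1})/E_2)$. Intersecting with the complement of $\p(E_2^\perp/E_2)$ yields exactly
\[
\p\bigl((E_2 + E_{2n-k-1})/E_2\bigr) \setminus \p\bigl((E_2 + E_{2n-k-1}) \cap E_2^\perp/E_2\bigr),
\]
as stated.
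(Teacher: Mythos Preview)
Your argument is correct and follows the same route as the paper's proof, with more detail in places (e.g.\ your explicit justification that $W_1 \subset E_2$ is automatic, and your construction of $V_2$ in the converse direction of the second equality). There is one small slip in the last part: the implication ``general position $\Rightarrow E_2 \cap E_{2n-k-1} = 0$'' fails when $k=0$, since $\dim E_{2n-1} = 2n-1$ forces a one-dimensional intersection; the paper handles this case separately by noting that then $E_2 + E_{2n-1} = \C^{2n}$, so both sides of the claimed identity equal all of $\Yo(\{E_2\})$. With that caveat, your proof is complete and matches the paper's.
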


\begin{proof}
The first equality is clear. For the isomorphism we just map $W_3$ to $W_3/E_2$ and remark that $W_3$ is non isotropic if and only if $W_3/E_2$ is not contained in $E_2^\perp/E_2$. 

The second equality is also easy. Let $V_2 \in \ell(W_3) \cap Z(E_{2n - k -1})$. Then $\dim(V_2 \cap E_{2n - k - 1}) \geq 1$ and $V_2 \subset W_3$. Conversely, for $W_3$ satisfying $\dim(W_3 \cap E_{2n - k - 1}) \geq 1$, we can find a $2$-dimensional isotropic subspace $V_2 \subset W_3$ meeting $E_{2n - k - 1}$ non trivially.

For $k = 0$, we have $E_2 + E_{2n - k - 1} = \C^{2n}$ (recall that $E_2$ and $E_{2n - k - 1}$ are in general position) thus $\Yo(Z(E_{2n - k -1})) = \Yo$ and the result follows. For $k \geq 1$, we have $E_2 \cap E_{2n - k - 1} = 0$ (since they are in general position). This implies for $W_3$ in the intersection the inclusion $W_3 \subset E_2 + E_{2n - k - 1}$ thus $W_3/E_2 \subset E_2 + E_{2n - k - 1}/E_2$ proving the result. 
\end{proof}

\begin{cor}\label{Cor.: 4-point invariant}
We have $I_1(\pt,\sigma_2,\sigma_i,\sigma_j) = \delta_{i + j,2n-2}.$
\end{cor}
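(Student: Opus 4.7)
The plan is to combine the two lemmas above with a straightforward linear-algebra count in a projective space. By the standard dimension formula for genus-zero degree-one Gromov--Witten invariants on $X = \IG(2,2n)$, which has $\dim X = 4n-5$ and Fano index $2n-1$, the virtual dimension of $\overline{M}_{0,4}(X,\ell)$ equals $6n-5$. Since the total real codegree of the insertions $\pt, \sigma_2, \sigma_i, \sigma_j$ is $(4n-5) + 2 + i + j$, the invariant vanishes unless $i + j = 2n - 2$, so from now on I assume this equality.

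Under this assumption, the first lemma of the section reduces the computation to a count of lines in $\Yo$ meeting general representatives of $\pt, \sigma_2, \sigma_i, \sigma_j$, i.e.\ lines of the form $\ell(W_3)$ with $E_2 \subset W_3$ (point condition, realized by some fixed isotropic $E_2 \in X$) and $\dim(W_3 \cap E_{2n-k-1}) \geq 1$ for $k \in \{2, i, j\}$. By the second lemma, the map $W_3 \mapsto W_3/E_2$ identifies $\Yo(\{E_2\})$ with the open subset $\p(\C^{2n}/E_2) \setminus \p(E_2^\perp/E_2)$ of $\p^{2n-3}$, and for $E_2$, $E_{2n-k-1}$ in general position it translates the condition coming from $\sigma_k$ into the containment $W_3/E_2 \in \p((E_2 + E_{2n-k-1})/E_2)$. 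This is a linear subspace of codimension $k-1$ in $\p(\C^{2n}/E_2)$ when $k \geq 2$, and equals the whole ambient space when $k \in \{0,1\}$.

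Thus the count reduces to intersecting three linear subspaces of codimensions $1$, $i-1$, $j-1$ in $\p^{2n-3}$. The total codimension is $1 + (i-1) + (j-1) = i + j - 1 = 2n - 3$, which is precisely the dimension of the ambient projective space, so for generic choices of $E_{2n-3}, E_{2n-i-1}, E_{2n-j-1}$ the intersection is a single reduced point. To finish I must check that this point lies in the open locus $\p(\C^{2n}/E_2) \setminus \p(E_2^\perp/E_2)$, but this is immediate from the second half of the lemma (\emph{loc.\ cit.}\ gives generic lines in $\Yo$) together with Kleiman--Bertini as invoked in the first lemma: $\p(E_2^\perp/E_2)$ is a fixed proper subvariety of $\p(\C^{2n}/E_2)$ independent of the $E_{2n-k-1}$'s, while the unique intersection point varies freely with them, so it lies outside generically. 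This yields exactly one line and proves $I_1(\pt,\sigma_2,\sigma_i,\sigma_j) = 1$.

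The only mildly delicate point is the accounting for the small extremes $k \in \{0,1\}$ (where the corresponding subspace is the entire $\p^{2n-3}$ and contributes no constraint) and $k = 2n-2$ (where the relevant subspace is a point); I do not expect a real obstacle there, since the codimension formula $k-1$ persists and the arithmetic $1 + (i-1) + (j-1) = 2n-3$ still singles out a single point. The real work has already been done by the lemma, which reduces the enumerative problem to transverse linear intersection, and by Kleiman--Bertini, which guarantees the expected count.
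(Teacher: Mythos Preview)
Your argument is correct and follows essentially the same route as the paper: reduce via the two lemmas to an intersection of three linear subspaces of codimensions $1$, $i-1$, $j-1$ in $\p^{2n-3}$, then observe that the codimensions sum to the ambient dimension so the intersection is a single point. Your additional remarks on the open locus and on the extreme values of $k$ are reasonable and only make explicit what the paper leaves implicit.
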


\begin{proof}
We already explained that the invariant vanishes unless $i + j = 2n-2$. In that case, $I_1(\pt,\sigma_2,\sigma_i,\sigma_j)$ is the number of lines meeting $\{E_2\}$, $Z(\{E_{2n - 3}\})$, $Z(\{E_{2n - i - 1}\})$ and $Z(\{E_{2n - j - 1}\})$ where the spaces $E_k$ are in general position and such that $\omega\vert_{E_k}$ has minimal rank. The variety of such lines is the intersection in $\p(\C^{2n}/E_2) \setminus \p(E_2^\perp/E_2)$ of the three linear spaces
$$\p(E_2 + E_{2n - 3}/E_2), \ \ \p(E_2 + E_{2n - i - 1}/E_2) \ \ \textrm{ and } \ \  \p(E_2 + E_{2n - j - 1}/E_2).$$
These linear spaces are in general position and of respective codimension $1$, $i-1$ and $j-1$. These codimensions add up to the dimension of $\p(\C^{2n}/E_2)$ thus the linear spaces meet in exactly one point.
\end{proof}

\section{Big quantum cohomology of $\IG(2,2n)$}
\label{Sec.: BQH}

In \cite{GMS,Pe} it is proved that the big quantum cohomology of $\IG(2,2n)$ is generically semisimple. In this section we will give an alternative proof of this result. This is the main result of this article.

As before we let $X=\IG(2,2n)$ and consider the deformation $\BQH_\tau(X)$ of the small quantum cohomology $\QH(X)$ inside the big quantum cohomology $\BQH(X)$ in the direction $\tau = \sigma_2$. Explicitly it means the following. For any cohomology classes $a,b \in H^*(X)$ the product is of the form
$$ a \star_\tau b = a \starz b + t \sum_{\sigma}\sum_{d \geq 1} q^d I_d(a,b,\sigma,\sigma_2) \sigma^\vee + O(t^2),$$
where $a \starz b$ is the small quantum product, $\sigma$ runs over the basis of the cohomology consisting of Schubert classes, $\sigma^\vee$ is the dual basis, and $t$ is the deformation parameter.

According to the dimension axiom for GW invariants $I_d(\sigma_i,\sigma_j,\sigma,\sigma_2)$ vanishes unless 
$$
i + j + \deg(\sigma) + 2 =  (2n-1)d + 2(2n-2).
$$ 
From here one easily deduces that $d = 1$ is the only possibility unless $i = j = 2n-2$. Therefore, applying Corollary \ref{Cor.: 4-point invariant}, we have
\begin{align}\label{Eq.: BQH product for special classes}
\sigma_i \star_\tau \sigma_j = \sigma_i \starz \sigma_j +  \delta_{i+j,2n-2} qt + O(t^2)
\end{align}
for $i + j < 4n-4$.

Consider the following elements in $\BQH_\tau(X)$
$$\begin{array}{l}
\Delta_r = det(\sigma_{1 + j - i})_{1 \leq i,j \leq r} \ \ \ \ \ \ \ \textrm{for $r \in [3,2n-2]$} \\
\\
\displaystyle{\Sigma_{2n-2} = \sigma_{n-1} \star_\tau \sigma_{n-1} + 2 \sum_{i = 1}^{n - 1} (-1)^i \sigma_{n - 1 + i} \star_\tau \sigma_{n - 1 - i}} \\
\\
\displaystyle{\Sigma_{2n} = \sigma_n \star_\tau \sigma_n + 2 \sum_{i = 1}^{n - 2} (-1)^i \sigma_{n + i} \star_\tau \sigma_{n - i} + (-1)^{n+1} q \sigma_{1},}\\
\end{array}
$$
where all products are taken in $\BQH_\tau(X)$. Note that these are ``the same" elements as those defining relations in the  presentation of $\QH(X)$ in Theorem \ref{thm-bkt}. The lower indices indicate the degree of the respective element in $\BQH_\tau(X)$. The variable $t$ can only appear together with a positive power of $q$ so that $q^dt$ can only occur in elements of degree at least $\deg(q) + \deg(t) = 2n-2$.

\begin{lemma}
We have $\Delta_r = O(t^2)$ for all $r \in [3,2n-2]$,  $\Sigma_{2n-2}
= (-1)^n qt + O(t^2)$ and $\Sigma_{2n} = O(t^2)$.
\end{lemma}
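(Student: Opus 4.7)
The plan is to expand each expression to first order in $t$ and compute separately the $t^0$ and $t^1$ coefficients. At $t^0$ every $\star_\tau$ collapses to $\starz$, and each of $\Delta_r$, $\Sigma_{2n-2}$, $\Sigma_{2n}$ becomes exactly one of the defining relations of $\QH(X)$ listed in Theorem~\ref{thm-bkt} (together with Proposition~\ref{Prop.: Presentation for coh I} for the $\Delta_r$); hence all three vanish, and the problem reduces to computing the $t^1$-coefficient in each case.

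For $\Sigma_{2n-2}$ every non-trivial pairwise $\star_\tau$-product in the defining sum satisfies $a + b = 2n-2$, while the summand with $i = n-1$ involves $\sigma_0 = 1$ and therefore produces no correction. Applying \eqref{Eq.: BQH product for special classes} to each term, and collecting signs and multiplicities, yields
\[ qt\Bigl(1 + 2 \sum_{i=1}^{n-2} (-1)^i\Bigr) = (-1)^n qt, \]
using that the inner sum is $0$ for $n$ even and $-1$ for $n$ odd. For $\Sigma_{2n}$ every pairwise product has $a + b = 2n \neq 2n-2$, so the Kronecker delta in \eqref{Eq.: BQH product for special classes} kills each contribution at once, and since the term $(-1)^{n+1} q \sigma_1$ is $t$-independent, one obtains $\Sigma_{2n} = O(t^2)$.

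The case of $\Delta_r$ requires the Leibniz rule for $\partial_t$ applied to an $r$-fold iterated product. Denoting by $\phi(\alpha,\beta)$ the $t^1$-coefficient of $\alpha \star_\tau \beta$ and using that $\phi$ extends bilinearly, the $t^1$-contribution of a monomial $\sigma_{a_1}\star_\tau \cdots \star_\tau \sigma_{a_r}$ is
\[ \sum_{k=1}^{r-1} \phi\bigl(\sigma_{a_1}\starz \cdots \starz \sigma_{a_k},\, \sigma_{a_{k+1}}\bigr) \starz \sigma_{a_{k+2}}\starz\cdots\starz \sigma_{a_r}. \]
The dimension axiom for the four-point invariants forces $\phi(\alpha,\sigma_j)$ to vanish unless $\deg(\alpha) + j \ge 2n-2$; since the total Chow degree of every monomial of $\Delta_r$ is exactly $r$, this is unattainable when $r < 2n - 2$, which already yields $\Delta_r = O(t^2)$ for those values of $r$.

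For $r = 2n-2$ only permutations $\pi$ with $\pi(i) = i-1$ for $i \ge k+2$ and $\pi(k+1) = 2n-2$ can contribute, and summing over the free part $\pi|_{[1,k]}$ collects the factor $\Delta_k$ computed in $\QH(X)$. By Theorem~\ref{thm-bkt} this vanishes for every $k \ge 3$, leaving only $k = 1$ and $k = 2$: the first gives $+q$ via $\phi(\sigma_1, \sigma_{2n-3}) = q$ (from \eqref{Eq.: BQH product for special classes}), and the second gives $-\phi(\sigma_1^2 - \sigma_2,\,\sigma_{2n-4})$. I expect the hard step of the proof to lie precisely in verifying that the two cancel, i.e.\ that $\phi(\sigma_1^2 - \sigma_2, \sigma_{2n-4}) = q$; writing $\sigma_1^2 - \sigma_2$ in the Schubert basis as a non-special class of Chow degree $2$, this reduces to a single four-point degree-one Gromov-Witten computation, which can be handled either by repeating the Kleiman-Bertini enumeration of Corollary~\ref{Cor.: 4-point invariant} for this cycle or, more conceptually, by combining the 2-cocycle identity for $\phi$ (a consequence of associativity of $\star_\tau$) with the divisor axiom for $\sigma_1 \in H^2(X)$, which reduces the computation to extracting a single structure coefficient from the presentation in Theorem~\ref{thm-bkt}.
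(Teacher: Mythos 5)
Your reduction is correct, and for $\Sigma_{2n-2}$, $\Sigma_{2n}$ and $\Delta_r$ with $r\le 2n-3$ your argument is complete and essentially identical to the paper's: the paper gets the vanishing for $r\le 2n-3$ from the grading remark that $q^dt$ can only occur in degree at least $2n-2$, which is exactly your inequality $\deg(\alpha)+j\ge 2n-2$, and your sign count $1+2\sum_{i=1}^{n-2}(-1)^i=(-1)^n$ (with the $i=n-1$ term killed because it involves the identity $\sigma_0$) is the intended computation. For $\Delta_{2n-2}$ your Leibniz/permutation bookkeeping is a valid and somewhat more systematic version of the paper's expansion of the determinant along the first column, and you land on precisely the right residual identity: everything hinges on $\phi(\sigma_1\starz\sigma_1-\sigma_2,\,\sigma_{2n-4})=q$, i.e.\ on the four-point invariant $I_1(\sigma_1^2-\sigma_2,\sigma_{2n-4},\pt,\sigma_2)$ being equal to $1$.

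The gap is that you never establish this value: you say you \emph{expect} the cancellation and list two strategies without carrying either out. This number is not covered by Corollary~\ref{Cor.: 4-point invariant}, which only handles insertions of special Schubert classes ($\sigma_1^2-\sigma_2$ is the non-special degree-two class), and it is the one genuinely new input of this part of the lemma, so it cannot be left as an expectation. The paper closes it by your second route: using associativity it writes $\sigma_{2n-4}\star_\tau\sigma_1\star_\tau\sigma_1=(\sigma_{2n-3}+\sigma_{2n-3}')\star_\tau\sigma_1$ with $\sigma_{2n-3}'$ the non-special class in the Pieri product (this is the same information as your cocycle identity applied to $(\sigma_{2n-4},\sigma_1,\sigma_1)$), then applies the divisor axiom to get $I_1(\sigma_{2n-3}',\sigma_1,\pt,\sigma_2)=I_1(\sigma_{2n-3}',\pt,\sigma_2)$, and finally evaluates this three-point invariant to $1$ by appealing to the results of \cite{ChPe}. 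That last evaluation is a real external input --- it is not read off immediately from the presentation in Theorem~\ref{thm-bkt}, and the paper does not attempt to extract it from there --- so your proof is incomplete until you either import $I_1(\sigma_{2n-3}',\pt,\sigma_2)=1$ from the small quantum Pieri/Chevalley data or actually perform the Kleiman--Bertini count of Section~4 for a cycle representing $\sigma_1^2-\sigma_2$.
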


\begin{proof}

(i) Here we prove the statement about $\Delta_r$'s. From Theorem \ref{thm-bkt} we know that $\Delta_r = O(t^2)$ for $r \in [3,2n-3]$. Thus, we only need to consider $\Delta_{2n-2}$. Inductively developing the determinant with respect to the first column we have
\begin{align*}
&\Delta_{2n-2} = \sum_{s=1}^{2n-2} (-1)^{s-1} \sigma_s \star_\tau \Delta_{2n-2-s} = O(t^2) - \sigma_{2n-4} \star_\tau \Delta_2 + \sigma_{2n-3} \star_\tau \Delta_1 - \sigma_{2n-2},
\end{align*}
where we used the fact that $\Delta_r = O(t^2)$ for $r \in [3,2n-3]$. Thus, we need to prove that 
\begin{align}\label{Eq.: formula I in the proof of Lemma O(t^2)}
\sigma_{2n-4} \star_\tau \sigma_2  - \sigma_{2n-4} \star_\tau \sigma_1 \star_\tau \sigma_1 + \sigma_{2n-3} \star_\tau \sigma_1  - \sigma_{2n-2}
\end{align}
is of the form $O(t^2)$. We will see this by reducing everything to products of special Schubert classes. First, let us look at the term $\sigma_{2n-4} \star_\tau \sigma_1 \star_\tau \sigma_1$. By degree reasons (or dimension axiom for GW invariants) we have
\begin{align}\label{Eq.: formula II in the proof of Lemma O(t^2)}
(\sigma_{2n-4} \star_\tau \sigma_1 ) \star_\tau \sigma_1 = ( \sigma_{2n-3} + \sigma_{2n-3}' ) \star_\tau \sigma_1,
\end{align}
where $\sigma_{2n-3}'$ is a Schubert class of degree $2n-3$ different from $\sigma_{2n-3}$. Since by \eqref{Eq.: BQH product for special classes} we have $\sigma_{2n-3} \star_\tau \sigma_1 =  \sigma_{2n-3} \starz \sigma_1 + qt + O(t^2)$, we only need to take care of the term  $\sigma_{2n-3}' \star_\tau \sigma_1$. By degree reasons we have
\begin{align*}
&\sigma_{2n-3}' \star_\tau \sigma_1 = \sigma_{2n-3}' \starz \sigma_1 +I_1(\sigma_{2n-3}',\sigma_1,\pt ,\sigma_2) qt + O(t^2).            \end{align*}
Applying the dimension axiom for GW invariants we see that the 4-point invariant $I_1(\sigma_{2n-3}',\sigma_1,\pt ,\sigma_2)$ is equal to the 3-point invariant $I_1(\sigma_{2n-3}',\pt ,\sigma_2)$. The latter can be computed using results in \cite{ChPe} and we get $I_1(\sigma_{2n-3}',\pt ,\sigma_2) = 1$. Therefore, we obtain
\begin{align}\label{Eq.: formula III in the proof of Lemma O(t^2)}
&\sigma_{2n-3}' \star_\tau \sigma_1 = \sigma_{2n-3}' \starz \sigma_1 + qt + O(t^2).            
\end{align}
Plugging \eqref{Eq.: formula II in the proof of Lemma O(t^2)} and \eqref{Eq.: formula III in the proof of Lemma O(t^2)} into \eqref{Eq.: formula I in the proof of Lemma O(t^2)} and using that $\Delta_{2n-2}$ vanishes modulo $t$ we get the required statement.

\medskip

(ii) To prove the statements about $\Sigma_{2n-2}$ and $\Sigma_{2n}$ we only need to use \eqref{Eq.: BQH product for special classes} and the fact that these elements vanish modulo $t$.
\end{proof}

\subsection{Structure of $\BQH_\tau(X)$}

Recall from Section \ref{SubSec.: Structure of QH} that for $\QH(X)$ we have the direct product decomposition 
\begin{align*}
& \QH(X)= A \times B = A \times \prod_{i \in I} B_i
\end{align*}
where $A=K[\varepsilon]/\varepsilon^{n-1}$ each $B_i$ is the ground
field $K$. By Hensel's lemma this decomposition lifts to $\BQH_\tau(X)$ and we have
\begin{align}\label{Eq.: Direct product decomposition for BQH_tau}
& \BQH_\tau(X) = \mathcal{A} \times  \prod_{i \in I} \mathcal{B}_i
\end{align}
where $\mathcal{A}$ and $\mathcal{B}_i$ are algebras over $K[[t]]$. 

\smallskip

By construction $K[[t]]$-algebra $\BQH_\tau(X)$ is a free module over $K[[t]]$ of finite rank equal to $\dim H^*(X)$. Therefore, $\cA$ and $\cB_i$'s are also free $K[[t]]$-modules of finite rank. Reducing modulo $t$ one sees that the rank of $\cA$ is $n-1$ and the ranks of $\cB_i$'s are all equal to 1. Similarly we obtain that the natural algebra homomorphism $K[[t]] \to \cB_i$ is an isomorphism of $K$-algebras.

\begin{prop}\label{Prop.: Regularity of BQH}
The ring $\BQH(X)$ is regular.
\end{prop}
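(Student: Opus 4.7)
The plan is to verify regularity of $\BQH(X)$ at each of its maximal ideals. Applying Hensel's lemma to the decomposition $\QH(X) = A \times \prod B_i$ from Section 3 lifts it to a product decomposition $\BQH(X) = \widetilde{\mathcal{A}} \times \prod \widetilde{\mathcal{B}}_i$ into complete local factors, with $\widetilde{\mathcal{A}}$ corresponding to the fat point and the $\widetilde{\mathcal{B}}_i$ to the reduced points. Each $\widetilde{\mathcal{B}}_i$ is finite flat of rank $1$ over $K[[t_0,\ldots,t_s]]$ with residue field $K$, hence isomorphic to $K[[t_0,\ldots,t_s]]$ and automatically regular. The content is therefore to prove regularity of $\widetilde{\mathcal{A}}$.

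I would reduce this to regularity of $\mathcal{A}$, the corresponding local factor of $\BQH_\tau(X) = \BQH(X)/(t_i : i \neq 2)$, using that $\widetilde{\mathcal{A}}/(t_i : i \neq 2) = \mathcal{A}$. If $\mathcal{A}$ is a DVR with uniformizer $\varepsilon$, then a Nakayama argument shows that $\mathfrak{m}_{\widetilde{\mathcal{A}}}$ is generated by $\varepsilon$ and $\{t_i\}_{i \neq 2}$ (since $t_2$ itself becomes redundant modulo $\mathfrak{m}_{\widetilde{\mathcal{A}}}^2 + (t_i : i \neq 2)$), giving exactly $s+1$ generators in a ring of Krull dimension $s+1$, whence $\widetilde{\mathcal{A}}$ is regular. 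Now $\mathcal{A}$ is finite flat over $K[[t]]$ of rank $n-1$ with $\mathcal{A}/t = K[\varepsilon]/\varepsilon^{n-1}$, so $\mathfrak{m}_{\mathcal{A}} = (\varepsilon, t)$; thus it suffices to show $t \in \mathfrak{m}_{\mathcal{A}}^2$, as this forces $\dim_K \mathfrak{m}_{\mathcal{A}}/\mathfrak{m}_{\mathcal{A}}^2 = 1$ and makes $\mathcal{A}$ a DVR.

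The central computation, the containment $t \in \mathfrak{m}_{\mathcal{A}}^2$, uses both identities from the preceding Lemma. Since every $\sigma_i$ with $i \geq 1$ reduces to an element of $(\varepsilon) \subset A$, all such $\sigma_i$ lie in $\mathfrak{m}_{\mathcal{A}}$. Expanding $\Delta_{2n-2}$ along its first column as in the Lemma's proof yields
\[
\sigma_{2n-2} = -\Delta_{2n-2} + \sum_{s=1}^{2n-3}(-1)^{s-1}\sigma_s \star_\tau \Delta_{2n-2-s}.
\]
Every summand on the right lies in $\mathfrak{m}_{\mathcal{A}}^2$: the term $-\Delta_{2n-2}$ and the summands with $2n-2-s \in [3,2n-3]$ are in $O(t^2) \subset (t)^2 \subset \mathfrak{m}_{\mathcal{A}}^2$ by the Lemma, while the two remaining summands (for $s \in \{2n-4, 2n-3\}$) are $\star_\tau$-products of at least two Schubert classes of positive degree. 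Therefore $\sigma_{2n-2} \in \mathfrak{m}_{\mathcal{A}}^2$. With this in hand, every summand of $\Sigma_{2n-2}$ lies in $\mathfrak{m}_{\mathcal{A}}^2$: each pair $\sigma_{n-1+i} \star_\tau \sigma_{n-1-i}$ with $1 \leq i \leq n-2$ is a product of two elements of $\mathfrak{m}_{\mathcal{A}}$, while the extremal term $\sigma_{2n-2} \star_\tau \sigma_0 = \sigma_{2n-2}$ is in $\mathfrak{m}_{\mathcal{A}}^2$ by the previous step. Comparing $\Sigma_{2n-2} \in \mathfrak{m}_{\mathcal{A}}^2$ with $\Sigma_{2n-2} = (-1)^n qt + O(t^2)$ from the Lemma, and using $q \in K^\times$, gives $t \in \mathfrak{m}_{\mathcal{A}}^2$ as desired.

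The main obstacle I anticipate is careful tracking of the $O(t^2)$ error terms and making sure the $\star_\tau$-products genuinely land in the appropriate powers of the maximal ideal: the only potentially troublesome summand of $\Sigma_{2n-2}$ is the one involving $\sigma_{2n-2}$ alone, and that is precisely the quantity rescued by the determinant identity $\Delta_{2n-2} = O(t^2)$, so the two halves of the Lemma fit together exactly as needed.
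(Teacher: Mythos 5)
Your proof is correct and follows essentially the same route as the paper: both rest on the Lemma's identities $\Delta_{2n-2}=O(t^2)$ and $\Sigma_{2n-2}=(-1)^n qt+O(t^2)$ to show that the embedding dimension of the local factor at the fat point equals its Krull dimension, the reduced points being trivially regular. The paper packages this as the Jacobian of the deformed Buch--Kresch--Tamvakis relations having maximal rank $2n-2$ (and repeats the argument with all deformation parameters), whereas you unpack the same computation intrinsically as $t\in\mathfrak{m}_{\mathcal{A}}^{2}$ and pass to $\BQH(X)$ by Nakayama; the content is identical.
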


\begin{proof}

Let us first prove that $\BQH_\tau(X)$ is a regular ring, which reduces to showing that $\cA$ is regular. For this we use the presentation of $\BQH_\tau(X)$ as the quotient of $(K[[t]])[\sigma_1,\cdots,\sigma_{2n-2}]$ with respect to the ideal generated by the elements $\Delta_r$ with $r \in [3,2n-2],$  $\Sigma_{2n-2}$ and $\Sigma_{2n}$. Computing the Jacobian of this system at the origin one sees that it is of maximal possible rank $2n-2$. From here, using Prop.~8.4A and Prop.~8.7 of \cite{hartshorne}, as usual, one obtains that $\dim_K \mu/\mu^2 = 1$, where $\mu$ is the maximal ideal of $\cA$. Since the Krull dimension of $\cA$ is also equal to one, we obtain the regularity of $\cA$.

The regularity of $\BQH(X)$ can be obtained by using an identical argument with the Jacobian. The only difference is the number of the deformation parameters. Note that no additional GW invariants are necessary.
\end{proof}

\begin{thm}\label{Thm.: semisimplicity}
$\BQH(X)$ is generically semisimple.
\end{thm}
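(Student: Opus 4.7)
The plan is to deduce the theorem as a short formal consequence of Proposition~\ref{Prop.: Regularity of BQH}, which has already done the heavy lifting. Set $R=K[[t_0,\dots,t_s]]$ with field of fractions $k_\eta$. By construction the morphism $\pi\colon\spec(\BQH(X))\to\spec(R)$ from Section~\ref{SubSec.: Def. Picture} is flat and finite, with $\BQH(X)$ a free $R$-module of finite rank. The generic fibre is
\[
\BQH(X)_\eta \;=\; \BQH(X)\otimes_R k_\eta,
\]
and, since $R$ is an integral domain, this tensor product agrees with the localization of $\BQH(X)$ at the image of the multiplicative system $R\setminus\{0\}$.

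The key step is that regularity of noetherian rings is preserved under localization. Combining this with Proposition~\ref{Prop.: Regularity of BQH}, one concludes that $\BQH(X)_\eta$ is itself a regular noetherian ring. On the other hand, finiteness of $\pi$ forces $\BQH(X)_\eta$ to be a finite-dimensional $k_\eta$-algebra, hence artinian, hence of Krull dimension zero. A zero-dimensional regular noetherian ring is automatically a finite product of fields: it decomposes as a product of artinian local factors, each of which must be a regular local ring of dimension zero, and any such ring is a field. Thus $\BQH(X)_\eta$ is semisimple, which by the definition recalled in Section~\ref{SubSec.: Semisimplicity} is precisely the generic semisimplicity of $\BQH(X)$.

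The main obstacle, such as it is, is psychological rather than mathematical: one is tempted to invoke generic smoothness in characteristic zero in its usual form, but the textbook statement requires the source to be of finite type over a field, whereas here neither $R$ nor $\BQH(X)$ is finitely generated over $K$. The localization argument above sidesteps this issue and relies only on elementary commutative algebra together with the regularity statement of Proposition~\ref{Prop.: Regularity of BQH}. No further Gromov--Witten computation should be needed at this step.
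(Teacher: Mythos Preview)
Your proof is correct and follows essentially the same route as the paper: both arguments observe that the generic fibre $\BQH(X)_\eta$ is a localization of $\BQH(X)$, invoke Proposition~\ref{Prop.: Regularity of BQH} together with stability of regularity under localization, and then conclude that a zero-dimensional regular ring is a product of fields. Your closing remark about why the textbook form of generic smoothness does not directly apply also mirrors the paper's own comment following the theorem.
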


\begin{proof}
This is a simple corollary of the above Proposition \ref{Prop.: Regularity of BQH}. Let us give a short proof for completeness.

The coordinate ring of the generic fiber 
\begin{align*}
\BQH(X)_{\eta} = \BQH(X) \otimes_{K[[t_0, \dots, t_s]]} K((t_0, \dots, t_s))
\end{align*}
is a localization of $\BQH(X)$. Therefore, it is also a regular ring, since $\BQH(X)$ was regular. This implies that $\BQH(X)_{\eta}$ is a product of finite field extensions of $K((t_0, \dots, t_s))$ which was our definition of semisimplicity. 
\end{proof}

\begin{rmk}
We view the above theorem as a consequence of "generic smoothness" in characteristic zero. The reason it is not formulated this way lies in the fact that both $\BQH(X)$ and $K[[t_0, \dots, t_s]]$ have modules of K\"ahler differentials of infinite rank. It might be possible to bypass this obstacle by working with a version of K\"ahler differentials that uses the topology of these rings but we did not pursue this. 
\end{rmk}

\end{document}